\theoremstyle{plain}
 \newtheorem{thm}{Theorem}
 \newtheorem{lem}{Lemma}
 \newtheorem{prop}{Proposition}
\newtheorem{defn}{Definition}
\newtheorem{rem}{Remark}
\begin{document}

\begin{centering}
{\LARGE\bf{Models of Simply-connected Trivalent $2$-dimensional Stratifolds}}

\vspace{.5cm}
\Large {J. C. G\'{o}mez-Larra\~{n}aga\footnote{Centro de
Investigaci\'{o}n en Matem\'{a}ticas, A.P. 402, Guanajuato 36000, Gto. M\'{e}xico. jcarlos@cimat.mx}\\
F. Gonz\'alez-Acu\~na\footnote{Instituto de Matem\'aticas, UNAM, 62210 Cuernavaca, Morelos,
M\'{e}xico and Centro de Investigaci\'{o}n en Matem\'{a}ticas, A.P. 402, Guanajuato 36000, Gto. M\'{e}xico. fico@math.unam.mx}\\ 
Wolfgang Heil\footnote{Department of Mathematics, Florida State University,
Tallahasee, FL 32306, USA. heil@math.fsu.edu}}

\vspace{0.5cm}

{\LARGE\bf{With an Implementation Code}}

\vspace{0.5cm}
 by {\Large Y.A. Hern\'{a}ndez-Esparza\footnote{Departamento de Matem\'{a}ticas, Universidad de Guanajuato. Jalisco s/n Mineral de Valenciana, Guanajuato 36240, Gto. M\'{e}xico, and Centro de Investigaci\'{o}n en Matem\'{a}ticas. yair.hernandez@cimat.mx }}
 
\end{centering}

\vspace{0.5cm}

\begin{abstract} Trivalent $2$-stratifolds are a generalization of $2$-manifolds in that there are disjoint simple closed curves where three sheets meet. We develop operations on their associated labeled graphs that will effectively construct from a single vertex all graphs that represent $1$-connected $2$-stratifolds. We describe an implementation on Python of these operations and other previous results.\end{abstract}

Mathematics Subject classification: 57M20, 57M05, 57M15

Keywords: stratifold, simply connected, trivalent graph.

\section{Introduction}   

In Topological Data Analysis  one studies high dimensional data sets by extracting shapes. Many of these shapes are $2$-dimensional simplicial complexes where it is computationally possible to calculate topological invariants such as homology groups (see for example \cite{CL}). $2$-complexes that are amenable for more detailed analysis are $2$-dimensional stratified spaces, which occur in the study of persistent homology of high-dimensional data \cite{B1}, \cite{B2}. Special classes of these stratified spaces are foams, which occur as special spines of 3-dimensional manifolds \cite{M}, \cite{P}. Khovanov \cite{Ko} used trivalent foams to construct a bigraded homology theory whose Euler characteristic is a quantum $sl(3)$ link invariant and Carter \cite{SC} presented an analogue of the Reidemeister-type moves for knotted foams in $4$-space. One would like to obtain a classification of 2-dimensional stratified spaces in terms of algebraic invariants, but this class is too large, even when restricted to foams.  However, there is a smaller class of $2$-dimensional stratified spaces, namely those with empty $0$-stratum, called $2$-stratifolds, for which such a classification is feasible. A closed $2$-{\it stratifold} is a $2$-dimensional cell complex $X$ that contains a collection of finitely many simple closed curves, the components of the $1$-skeleton $X^{(1)}$ of $X$, such that $X-X^{(1)}$ is a $2$-manifold and a neighborhood of each component $C$ of $X^{(1)}$ consists of $n\geq 3$ sheets (a precise definition is given in section 2). In particular, if the number of sheets is three, then $X$ is called trivalent. Such $2$-stratifolds  also arise in the study of categorical invariants of $3$-manifolds. For example if $\mathcal{G}$ is a non-empty family of groups that is closed under subgroups, one would like to determine which (closed) $3$-manifolds have $\mathcal{G}$-category equal to $3$. In \cite{GGL} it is shown that such manifolds have a decomposition into three compact $3$-submanifolds $H_1 ,H_2 ,H_3$ , where the intersection of $H_i \cap H_j$ (for $i\neq j$) is a compact $2$-manifold, and each $H_i$ is $\mathcal{G}$-contractible (i.e. the image of the fundamental group of each connected component of $H_i$ in the fundamental group of the $3$-manifold is in the family $\mathcal{G}$). The nerve of this decomposition, which is the union of all the intersections $H_i \cap H_j$ ($i\neq j$), is a closed $2$-stratifold and determines whether the $\mathcal{G}$-category of the 3-manifold is $2$ or $3$.

When trying to classify $2$-stratifolds one first looks at their fundamental groups. Free groups, surface groups, Baumslag-Solitar groups (Hopfian or non-Hopfian), $F$-groups are realized as fundamental groups of $2$-stratifolds. The question of which (closed) $3$-manifold groups are $2$-stratifold groups was solved in \cite{GGH4}. Very few $3$-manifolds have $2$-stratifold spines. Since $3$-manifold groups have solvable word problem (\cite{AFW}), the question arises whether this is true for $2$-stratifold groups. An affirmative answer was proved in \cite{GGH3}.

A $2$-stratifold is essentially determined by its associated bipartite labelled graph (defined in section 2) and a presentation for its fundamental group can be read off from the labelled graph. Thus the question arises when a labelled graph determines a simply connected $2$-stratifold. In \cite{GGH1} an algorithm on the labelled graph was developed for determining whether the graph determines a simply connected $2$-stratifold and in \cite{GGH2} we obtained a complete classification of all trivalent labelled graphs that represent simply connected $2$-stratifolds.

In the present paper we develop three operations on labelled graphs that will construct from a single vertex all trivalent graphs that represent simply connected $2$-stratifolds. In section 6  an implementation of these three operations and some results from \cite{GGH2} is described. It was coded using Python and NetworkX. 
\section{Properties of the graph of a $2$-stratifold.}

We first review the basic definitions and some results given in \cite{GGH} and \cite{GGH1}. A  $2$-{\it stratifold} is a compact, Hausdorff space $X$ that contains a closed (possibly disconnected) $1$-manifold $X^{(1)}$ as a closed subspace with the following property: Each  point $x\in X^{(1)}$  has a neighborhood homeomorphic to $\mathbb{R}{\times}CL$, where $CL$ is the open cone on $L$ for some (finite) set $L$ of cardinality $>2$  and $X - X^{(1)}$ is a (possibly disconnected) $2$-manifold.\\

A component $B\approx S^1$ of $X^{(1)}$ has a regular neighborhood $N(B)= N_{\pi}(B)$ that is homeomorphic to $(Y {\times}[0,1]) /(y,1)\sim (h(y),0)$, where $Y$ is the closed cone on the discrete space $\{1,2,...,d\}$ and $h:Y\to Y$ is a homeomorphism whose restriction to $\{1,2,...,d\}$ is the permutation $\pi:\{1,2,...,d\}\to  \{1,2,...,d\}$. The space $N_{\pi}(B)$ depends only on the conjugacy class of $\pi \in S_d$ and therefore is determined by a partition of $d$. A component of $\partial N_{\pi}(B)$ corresponds then to a summand of the partition determined by $\pi$. Here the neighborhoods $N(B)$ are chosen sufficiently small so that for disjoint components $B$ and $B'$ of $X^{(1)}$, $N(B)$ is disjoint from $N(B' )$. The components of $\overline{N(B)-B}$ are called the {\it sheets} of $N(B)$.\\

For a given $2$- stratifold $(X,X^{(1)} )$ there is an associated bipartite graph $\Gamma=\Gamma(X,X^{(1)} )$ embedded in $X$ as follows:\\

In each component $B_j$ of $X^{(1)}$ choose a black vertex $b_j$. In the interior of each component $W_i$ of $M=\overline{X-\cup_j N(B_j)}$ choose a white vertex $w_i$. In each component $C_{ij}$ of $W_i \cap N(B_j )$ choose a point $y_{ij}$, an arc $\alpha_{ij} $ in $W_i$ from $w_i$ to $y_{ij}$ and an arc $\beta_{ij}$ from $y_{ij}$ to $b_j$ in the sheet of $N(B_j )$ containing $y_{ij}$. An edge $e_{ij}$ between $w_i$ and $b_j$ consists of the arc $\alpha_{ij} *\beta_{ij}$. For a fixed $i$, the arcs $\alpha_{ij}$ are chosen to meet only at $w_i$.\\

We label the graph $\Gamma$ by assigning to a white vertex $W$ its genus $g$ of $W$ and by labelling an edge $C$ by $r$, where $r$ is the summand of the partition $\pi$ corresponding to the component $C$  of  $\partial N_{\pi}(B)$ where $C\subset \partial N_{\pi}(B)$. (Here we use Neumann's \cite{N} convention of assinging negative genus $g$ to nonorientable surfaces). White vertices of genus $0$ are not labelled. Note that the partition $\pi$ of a black vertex is determined by the labels of the adjacent edges. If $\Gamma$ is a tree, then the labeled graph determines $X$ uniquely. \\

Another description of $X_{\Gamma}$ is as a quotient space $W \cup_{\psi}X^{(1)}$, where $W=\bigcup W_i$ and where $\psi:\partial W\to X^{(1)}$ is a covering map (and $|\psi^{-1}(x)| >2$ for every $x\in X^{(1)}$). For a component $C=S^1$ of $\partial W$ 
the label $r$ on the corresponding edge then corresponds to the attaching map $\psi ( z)=z^r$.\\

\noindent {\bf Notation}.  If $\Gamma$ is a bipartite labelled graph corresponding to the $2$-stratifold $X$ we let $X_{\Gamma} =X$ and $\Gamma_X =\Gamma$.  An example is given in the picture below.

\begin{figure}[ht]
\begin{center}
\includegraphics[width=3.5in]{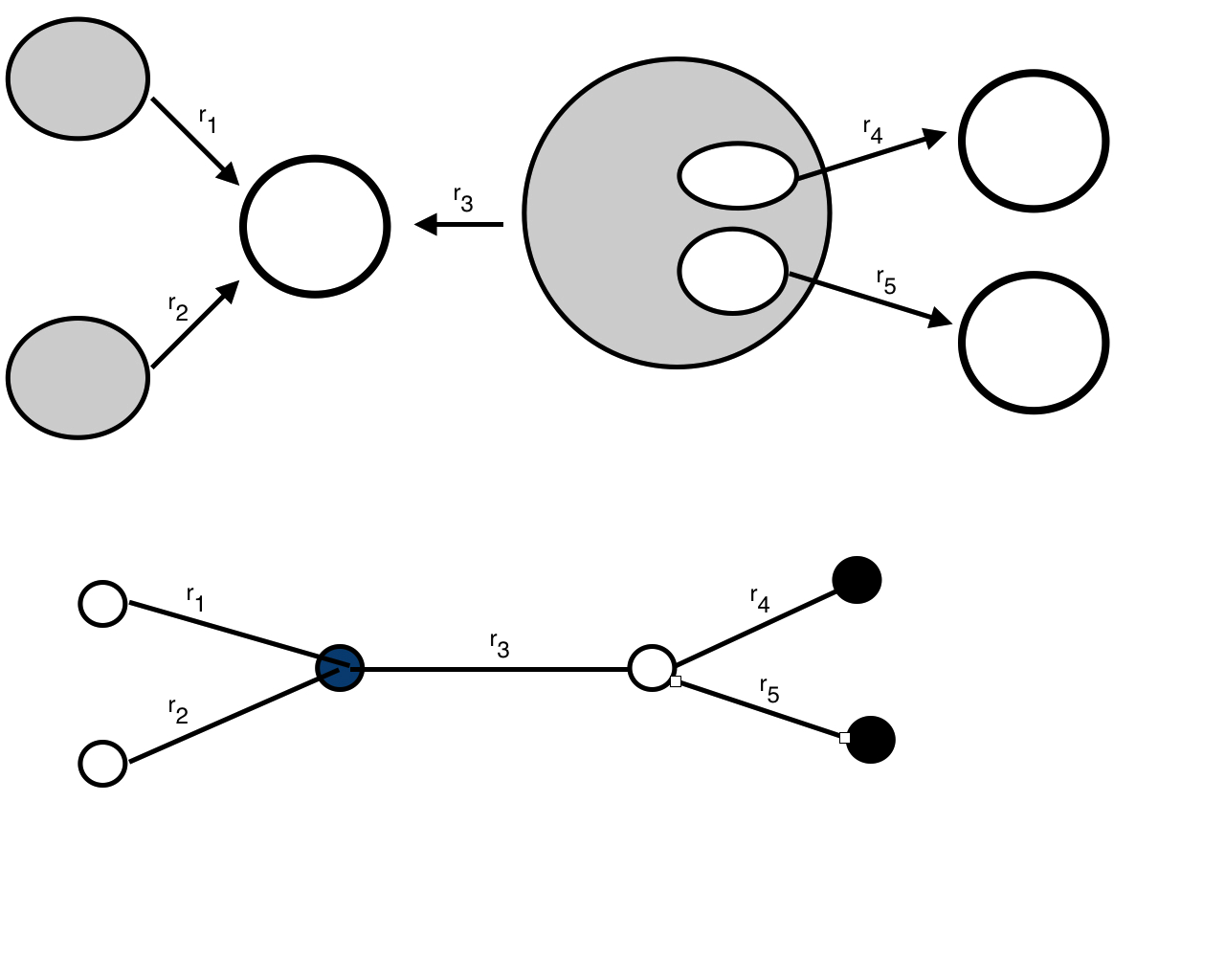} 
\end{center}
\caption{$X_{\Gamma}$ and $\Gamma_X$}
\end{figure}

The following two propositions are shown in  \cite{GGH}.

\begin{prop}\label{retraction} There is a retraction $r:X\to \Gamma_X$. 
\end{prop}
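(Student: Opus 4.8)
The plan is to build the retraction $r:X\to\Gamma_X$ piece by piece, using the decomposition $X = M\cup\bigl(\bigcup_j N(B_j)\bigr)$, where $M=\overline{X-\bigcup_j N(B_j)}$ is a compact $2$-manifold (with boundary) and each $N(B_j)$ is a regular neighborhood of a component $B_j\approx S^1$ of $X^{(1)}$. Recall that $\Gamma_X$ is embedded in $X$: each white vertex $w_i$ lies in the interior of a component $W_i$ of $M$, each black vertex $b_j$ lies on $B_j$, and the edge $e_{ij}=\alpha_{ij}*\beta_{ij}$ runs from $w_i$ out to a point $y_{ij}\in C_{ij}\subset W_i\cap N(B_j)$ and then along a sheet to $b_j$. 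I will define $r$ separately on $M$ and on each $N(B_j)$ and check the definitions agree on the overlap $M\cap N(B_j)=\bigcup_i C_{ij}$.

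First I would handle a single component $W_i$ of $M$. Since $W_i$ is a compact connected surface with boundary $\partial W_i=\bigsqcup_j C_{ij}$, and the points $y_{ij}$ together with the arcs $\alpha_{ij}$ (meeting only at $w_i$) form a cone, one can collapse $W_i$ onto the sub-tree $\bigcup_j\alpha_{ij}\subset W_i$ by a deformation retraction: a compact surface with nonempty boundary deformation retracts onto any spine, and we may choose the spine to be a wedge of circles containing the tree $\{w_i\}\cup\bigcup_j\alpha_{ij}$ as a subcomplex, then further retract the wedge of circles onto that tree. (More simply: $W_i$ deformation retracts onto a $1$-complex, and any connected $1$-complex retracts onto a chosen spanning tree, which we take to contain our arcs.) This gives a retraction $r_i:W_i\to\bigcup_j\alpha_{ij}$ which, crucially, can be arranged to send each boundary circle $C_{ij}$ to the single point $y_{ij}$. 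Doing this on every component of $M$ and taking the union yields a retraction $r_M:M\to\bigcup_{i,j}\alpha_{ij}$ that is constant (equal to $y_{ij}$) on each component $C_{ij}$ of $\partial M$.

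Next I would treat a single $N(B_j)$. Using the model $N_\pi(B_j)=(Y\times[0,1])/(y,1)\sim(h(y),0)$ with $Y$ the cone on $\{1,\dots,d\}$, the neighborhood $N(B_j)$ deformation retracts onto its core circle $B_j$ by pushing along the cone direction; composing with a retraction of $B_j$ onto the point $b_j$ is not what we want (we need a retraction onto the part of $\Gamma$ inside $N(B_j)$, namely $B_j$ together with the arcs $\beta_{ij}$). Instead, retract each sheet of $N(B_j)$ onto the union of $B_j$ with the arc $\beta_{ij}$ in that sheet, arranging the retraction to fix $B_j$, to fix $y_{ij}$, and to send each boundary circle $C_{ij}\subset\partial N(B_j)$ to the point $y_{ij}$. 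Concretely, a sheet is an annulus $C_{ij}\times[0,1]$ with $C_{ij}\times\{1\}$ glued to $B_j$ via $z\mapsto z^{r}$; retract the annulus to the arc $\{y_{ij}\}\times[0,1]=\beta_{ij}$ rel the endpoint on $B_j$, then note the other endpoint is $y_{ij}$ and the whole circle $C_{ij}\times\{0\}$ maps to $y_{ij}$. Taking the union over sheets and over $j$ gives $r_N:\bigcup_j N(B_j)\to B_j\cup\bigcup_{i,j}\beta_{ij}$, again constant on each $C_{ij}$.

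Finally I would glue: on the overlap $\bigcup_{i,j}C_{ij}$ both $r_M$ and $r_N$ are the constant map to $y_{ij}$, so they patch to a well-defined continuous map $r:X\to\Gamma_X$ (continuity on the closed cover $\{M\}\cup\{N(B_j)\}$ follows from the pasting lemma, since the pieces are closed and $X$ is compact Hausdorff). On $\Gamma_X$ itself, $r_M$ restricted to $\bigcup\alpha_{ij}$ and $r_N$ restricted to $B_j\cup\bigcup\beta_{ij}$ are the respective identities (the spine-retraction of a $1$-complex fixes the chosen tree pointwise), so $r|_{\Gamma_X}=\mathrm{id}$, which is exactly the defining property of a retraction. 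I expect the main technical obstacle to be the bookkeeping that lets all the local retractions be chosen simultaneously so that each boundary curve $C_{ij}$ collapses to its single marked point $y_{ij}$ from both sides; once that compatibility is arranged the pasting lemma does the rest. (If one only wants a retraction up to homotopy, i.e. that $\Gamma_X\hookrightarrow X$ is a homotopy equivalence is false in general, but the statement here is just about a retraction, which the above construction gives directly.)
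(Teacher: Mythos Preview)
The paper does not give its own proof of this proposition but refers to \cite{GGH}; your piece-by-piece strategy (retract each $W_i$ and each $N(B_j)$ onto its intersection with $\Gamma_X$, arranging that every boundary circle $C_{ij}$ collapses to its marked point $y_{ij}$, then glue via the pasting lemma) is the natural approach and is essentially correct.

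There is, however, one genuine confusion in your treatment of $N(B_j)$. You assert that ``the part of $\Gamma$ inside $N(B_j)$'' is ``$B_j$ together with the arcs $\beta_{ij}$'', and accordingly that your retraction should ``fix $B_j$'' and have target $B_j\cup\bigcup_{i,j}\beta_{ij}$. This is wrong: the branch circle $B_j$ is \emph{not} part of the graph $\Gamma_X$; only the single black vertex $b_j\in B_j$ is. Hence $\Gamma_X\cap N(B_j)=\{b_j\}\cup\bigcup_i\beta_{ij}$, a cone with apex $b_j$, and a map landing in $B_j\cup\bigcup_{i,j}\beta_{ij}$ would not be a retraction onto $\Gamma_X$. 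Happily, your \emph{concrete} formula $(c,t)\mapsto(y_{ij},t)$ on each sheet annulus does not fix $B_j$ at all: it collapses the inner boundary, and hence all of $B_j$, to the single point $b_j$ (once the $y_{ij}$ are chosen so that all the radial arcs end at $b_j$), and this is well-defined on $N(B_j)$ because every sheet sends $B_j$ to the same point $b_j$. So your construction already lands in the correct target $\{b_j\}\cup\bigcup_i\beta_{ij}$; only the surrounding prose needs to be corrected. A smaller point on the $W_i$ side: a deformation retraction of $W_i$ onto a spine will send each boundary circle $C_{ij}$ to a loop in the spine, not to the point $y_{ij}$, so your spine argument does not directly deliver the boundary condition you need. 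The cleanest repair is to note that a finite tree is an absolute retract: first collapse each $C_{ij}$ to a point (the tree $\{w_i\}\cup\bigcup_j\alpha_{ij}$ still embeds in the quotient since it meets $C_{ij}$ only in $y_{ij}$), then retract the quotient onto that tree; the composite is the desired retraction with $r(C_{ij})=y_{ij}$ automatic.
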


 \begin{prop}\label{simplyconnected} If $X$ is simply connected, then $\Gamma_X$ is a tree, all white vertices of $\Gamma_X$ have genus $0$, and all terminal vertices are white.
\end{prop}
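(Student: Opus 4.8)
The plan is to prove the three assertions separately. First, $\Gamma_X$ is a tree: since $X$ is simply connected it is connected, so $\Gamma_X=r(X)$ is connected; and a retraction $r\colon X\to\Gamma_X$ (Proposition~\ref{retraction}) makes the inclusion $\Gamma_X\hookrightarrow X$ injective on fundamental groups, so $\pi_1(\Gamma_X)$ embeds in $\pi_1(X)=1$. A connected graph with trivial fundamental group has no cycles, hence is a tree.

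\textbf{Every white vertex has genus $0$.} Fix a white vertex $w$; let $W=W_w$ be its surface, with boundary circles $C_1,\dots,C_n$, and let $\widehat W$ be the closed surface obtained by capping each $C_\ell$ with a disk $D_\ell$. The key step is to construct a map $f\colon X\to\widehat W$ whose restriction to $W$ is the inclusion $W\hookrightarrow\widehat W$. Since $\Gamma_X$ is a tree, deleting $w$ disconnects it into exactly $n$ pieces, so $Z:=\overline{X\setminus W}$ is a disjoint union $Z_1\sqcup\cdots\sqcup Z_n$ with $C_\ell\subset Z_\ell$. On each $Z_\ell$ I extend the identification $C_\ell\xrightarrow{\approx}\partial D_\ell$ to a map $Z_\ell\to D_\ell$: this is possible because $Z_\ell$ is compact Hausdorff, so the two coordinate functions of $C_\ell\to\partial D_\ell\subset D^2\subset\mathbb R^2$ extend over $Z_\ell$ by Tietze's theorem, and one then composes with the retraction $\mathbb R^2\to D^2$. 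Gluing these extensions to $W\hookrightarrow\widehat W$ defines $f$. Now the inclusion $\iota\colon W\hookrightarrow\widehat W$ equals $f$ followed by $W\hookrightarrow X$, so $\iota_*\colon\pi_1(W)\to\pi_1(\widehat W)$ factors through $\pi_1(X)=1$ and is trivial; but $\iota_*$ is also surjective, since $\widehat W$ is obtained from $W$ by attaching $2$-cells. Hence $\pi_1(\widehat W)=1$, so $\widehat W\cong S^2$ and $W$ has genus $0$.

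\textbf{Every terminal vertex is white.} Suppose some terminal vertex of $\Gamma_X$ were black, say $b$. Being terminal, it has a single incident edge, whose label is the lone part of the partition at $b$ and hence an integer $d\ge 3$; correspondingly $\partial N(B)$ is a single circle $C$. Write $X=X'\cup N(B)$ with $X'=\overline{X\setminus N(B)}$; the two pieces meet exactly in $C$, which represents $d$ times a generator of $H_1(N(B))\cong H_1(B)\cong\mathbb Z$. A Mayer--Vietoris computation then yields a surjection $H_1(X)\twoheadrightarrow\mathbb Z/d$, nonzero since $d\ge 3$, contradicting $\pi_1(X)=1$. Hence every terminal vertex is white.

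The middle step is the main obstacle. One cannot hope to retract $X$ onto $W$: inside $Z$ the circles $C_\ell$ wrap multiply around the black circles and cannot be ``unrolled'', so the argument must go the other way and push the complement of $W$ \emph{into} the capping disks. Checking that $f$ is well defined and continuous, and that the tree hypothesis really forces each component of $Z$ to contain exactly one $C_\ell$, is where the care lies; the first and third steps are short once Proposition~\ref{retraction} is available.
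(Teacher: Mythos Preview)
Your argument is correct. The paper itself does not prove this proposition; it simply cites \cite{GGH}, so there is no in-paper proof to compare against. Your three-part argument is self-contained and valid: the tree assertion follows immediately from the retraction of Proposition~\ref{retraction}; the genus-$0$ assertion is handled by your construction of $f\colon X\to\widehat W$ (the decomposition $Z=Z_1\sqcup\cdots\sqcup Z_n$ really does hold once $\Gamma_X$ is known to be a tree, since components of $\overline{X\setminus W}$ correspond to components of $\Gamma_X\setminus\{w\}$, and Tietze plus the radial retraction onto $D^2$ gives the required extensions because each $Z_\ell$ is compact Hausdorff, hence normal); and the terminal-vertex assertion is a clean Mayer--Vietoris computation, the key point being that the sum of the edge labels at a black vertex is at least $3$, so a single edge forces $d\ge 3$.

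Two small slips worth fixing. First, you wrote that $\iota$ ``equals $f$ followed by $W\hookrightarrow X$''; you mean the other order, $\iota = f\circ(W\hookrightarrow X)$, as your next sentence makes clear. Second, the final paragraph reads as meta-commentary rather than proof; you may want to drop it or fold its content (well-definedness of $f$, one $C_\ell$ per $Z_\ell$) into the body of the genus-$0$ step.
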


\noindent {\bf Generators and relations of $\pi_1  (X_{\Gamma})$.}\\

\noindent If $\Gamma_X$ is a tree with all white vertices of $\Gamma_X$ of genus $0$,  then $\pi_1 (X_{\Gamma} )$ has a natural presentation with generators
 
 \noindent \begin{tabular}{rll}
 & $\{b\}_{b\in \mathcal{B}}$ & where $\mathcal{B}$ denotes the set of black vertices    \\
 
    & $\{c_1 ,\dots, c_p \}_{w\in \mathcal{W}}$ & where $\mathcal{W}$ denotes the set of white vertices\\
   \end{tabular} 
   
   (Here $c_1 ,\dots, c_p$ correspond to the boundary curves of the $p$-times 
   
   punctured $S^2$ that corresponds to $w$).

\noindent and relations: 

\noindent \begin{tabular}{rl}
& $c_1 \cdots c_p  =1$, one for each $w\in \mathcal{W}$\\

&  $b^m =c_i$, for each edge $c_i \in \Gamma_X$ between $w$ and $b$ with label $m\geq 1$ \\
& (corresponding to $W \cap N(B_b )$)\\
\end{tabular} \\

For example for the graph of Figure 1 we read off the following presentation of $\pi_1 (X_{\Gamma})=\{b_1 ,b_2 ,b_3 ,c_1 ,c_2 ,c_3 ,c_4 ,c_5 \,|\, c_1 =1, c_2 =1, c_3 c_4 c_5 =1, \,b_1^{r_1} =c_1 ,b_2^{r_2} =c_2 ,b_3^{r_3} =c_3 ,b_3^{r_4} =c_4 ,b_3^{r_5} =c_5 \}=\{b_1 ,b_2 ,b_3  \,|\,  \,b_1^{r_1} =1 ,b_2^{r_2} =1 ,b_3^{r_3 +r_4 +r_5} =1 \}$.

\

\section{Constructing trivalent graphs with all edge labels $1$.}

On a labeled graph $\Gamma=\Gamma_X$ with all white vertices of genus $0$ consider the following operation $O1$ that changes $\Gamma=\Gamma_X$ to $\Gamma_1 =\Gamma_{X_1}$:

\
\begin{figure}[ht]
\begin{center}
\includegraphics[width=3.5in]{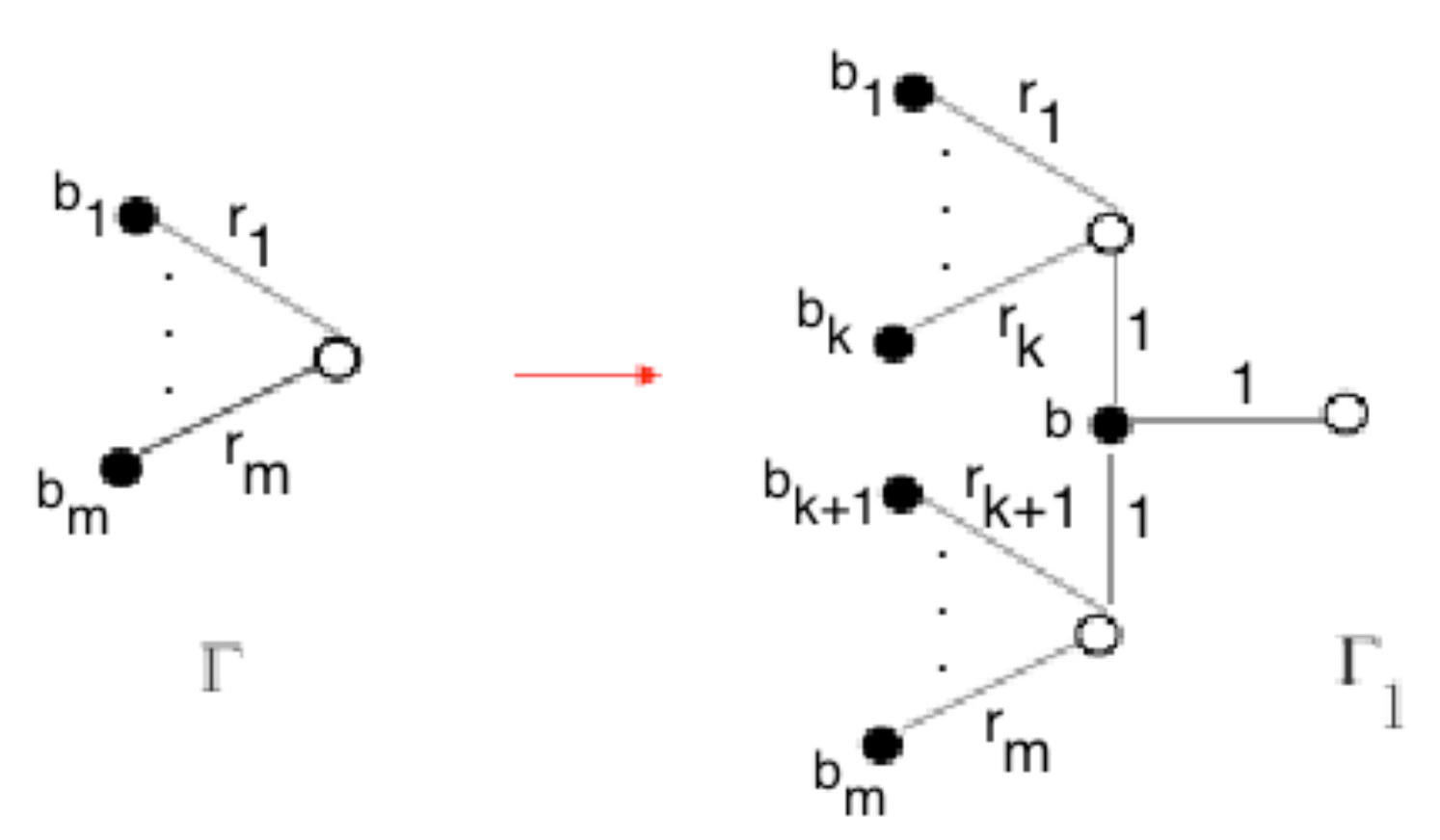} \qquad \qquad $m\geq k \geq 0$
\end{center}
\end{figure}

\begin{figure}[ht]
\begin{center}
\includegraphics[width=2.5in]{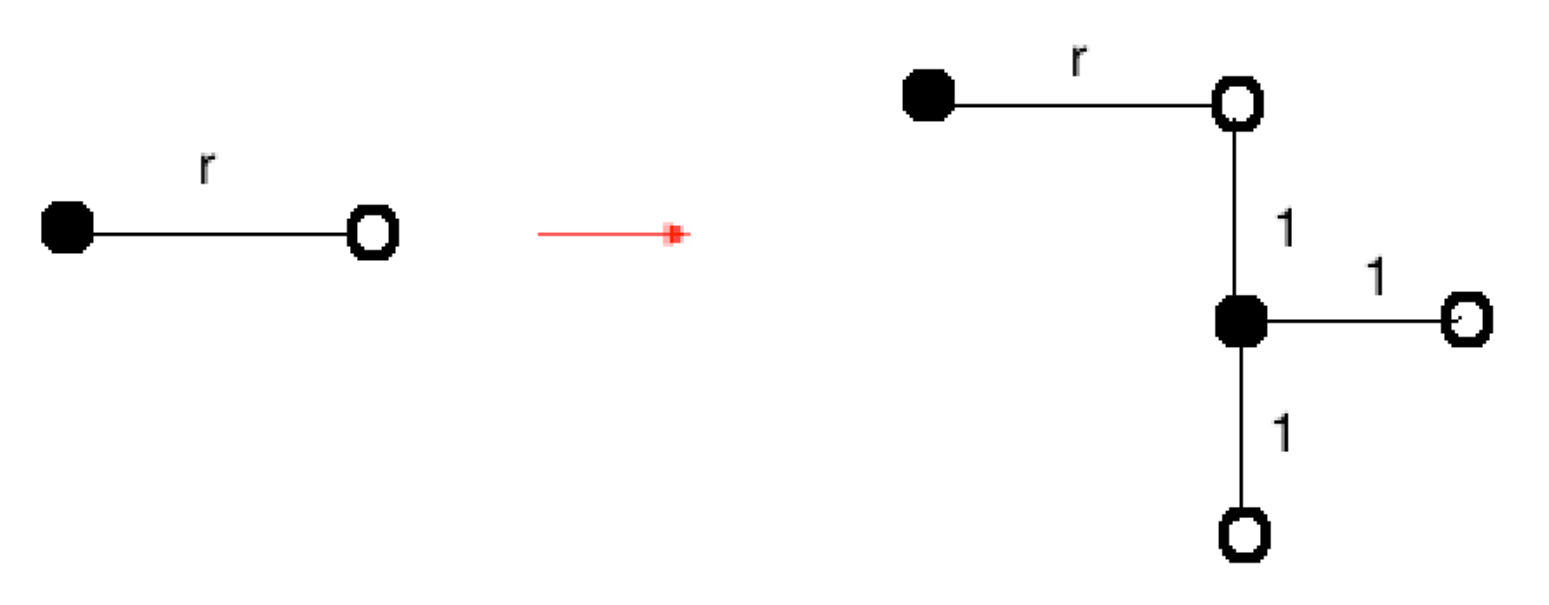} \qquad \qquad $m=1$
\end{center}
\caption{\,Operation $O1$}
\end{figure}

If $b_i$ is the black vertex incident to the edge labeled $r_i$ then the corresponding relation $b_1^{r_1}\dots b_m^{r_m}=1$ in $\pi_1 (X)$ is changed to the relations $b_1^{r_1}\dots b_k^{r_k} b=1$, $b=1$,  $b_{k+1}^{r_{k+1}}\dots b_m^{r_m}=1$ in $\pi_1 (X_1 )$ and it follows that $\pi_1 (X_1 )$ is a quotient of $\pi_1 (X)$. In particular we note:
\begin{rem}\label{remop1} If $X$ is simpy connected, then operation $O1$ does not change the fundamental group.\end{rem}

\begin{lem}\label{op1} Let $X$ be a trivalent $2$-stratifold such that $\Gamma_X$ is a tree with all white vertices of label $0$, all terminal vertices white, and all edge labels $1$. Then $\Gamma_X$ can be reconstructed from any white vertex $w$ of $\Gamma_X$ by successively performing $O1$.
\end{lem}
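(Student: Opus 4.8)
The plan is to induct on the number of edges (equivalently, the number of black vertices) of $\Gamma_X$. The base case is when $\Gamma_X$ has a single vertex, which must then be the white vertex $w$ (since terminal vertices are white and a point has no edges); there is nothing to reconstruct. For the inductive step I want to realize $\Gamma_X$ as $\Gamma_{X_1}$ for some smaller trivalent tree $\Gamma_X'$ of the same type (tree, white vertices labelled $0$, terminal vertices white, all edge labels $1$), in such a way that $w$ survives in $\Gamma_X'$ and corresponds to the white vertex at which $O1$ is performed. Since $O1$ only ever acts locally at a chosen white vertex, and by the inductive hypothesis $\Gamma_X'$ can be reconstructed from $w$ by successively performing $O1$, appending one more $O1$ then yields $\Gamma_X$ from $w$.

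The key step is thus to locate, in $\Gamma_X$, a configuration that is the \emph{output} of a single $O1$ move applied near $w$, and to check that collapsing it leaves a graph still of the required type. First I would look at the white vertex $w$ and the black vertices $b$ adjacent to it. Because $X$ is trivalent, each such black vertex $b$ has exactly three incident edges, all labelled $1$; one of them goes to $w$. Consider the structure hanging off $w$: after the $O1$ move in Figure~2 (the $m=1$ case and the general case), the newly created piece near $w$ is a black vertex of valence $3$ having a terminal white vertex as one neighbor (the ``$b=1$'' sheet), together with two other white neighbors, one of which is $w$ itself and the other carrying the rest of the tree. I would argue that in any $\Gamma_X$ with more than one edge there must exist a black vertex $b$ adjacent to $w$ that has a terminal white neighbor (a ``leaf sheet''): indeed, take a longest path in the tree starting at $w$; its far end is a terminal white vertex $w'$, whose unique neighbor is a black vertex $b'$; if $b'$ were not adjacent to $w$ one could first move $w$ along the tree — but more cleanly, one shows directly that the vertex $b'$ adjacent to the deepest white leaf, together with its three sheets, is exactly an $O1$-output pattern, and collapsing it (the reverse of $O1$) removes $b'$ and its leaf white vertex and merges the remaining two sheets appropriately, decreasing the edge count while preserving treeness, the genus-$0$ condition, the all-labels-$1$ condition, and the property that terminal vertices are white.

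The main obstacle I anticipate is the bookkeeping in the reverse move: one must verify that after deleting the ``leaf'' black vertex $b'$ and its terminal white vertex, the two remaining sheets of $b'$ can legitimately be amalgamated into the data of a single smaller trivalent tree — in particular that no genus or higher edge label is forced, that one does not accidentally create a terminal black vertex or a non-tree, and that the chosen base vertex $w$ is untouched (so that if $b'$ happened to be adjacent to $w$, collapsing still leaves $w$ present with the correct local picture to re-apply $O1$). This is exactly the content encoded in Figure~2, read backwards, so the argument is a careful matching of the pre-image and image pictures of $O1$ rather than any deep computation; Remark~\ref{remop1} guarantees that the fundamental group (trivial) is unchanged throughout, so all intermediate graphs indeed represent simply connected trivalent $2$-stratifolds of the stated form, keeping the induction inside the class where the hypotheses of the lemma apply.
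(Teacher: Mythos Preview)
Your argument is essentially correct, but it follows a different route from the paper's, and your write-up wobbles between two incompatible plans before settling on the right one.

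The paper does not collapse at a far-away leaf. Instead it picks a black vertex $b$ \emph{adjacent to the given $w$}, deletes $b$ together with its three edges, and obtains three disjoint subtrees $\Gamma',\Gamma'',\Gamma'''$ (with $w\in\Gamma'$), each satisfying the hypotheses with strictly fewer black vertices. By induction $\Gamma'$ is built from $w$; one application of $O1$ at $w$ (taking $k=m$, so $w$ keeps all its $\Gamma'$-neighbours) reinstates $b$ together with two bare white vertices playing the roles of $w'',w'''$; then induction applied to $\Gamma''$ (from $w''$) and $\Gamma'''$ (from $w'''$) rebuilds the remaining pieces. So the paper uses the induction hypothesis three times, once per branch.

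Your approach --- find a leaf $w'$ at the far end of a longest path from $w$, let $b'$ be its black neighbour, merge the other two white neighbours $u,v$ of $b'$ into a single vertex, and apply the induction hypothesis once to the resulting smaller tree --- also works, and is in some ways slicker: only one recursive call, and no need to restart the construction from auxiliary base-points $w'',w'''$. What it costs you is having to define and check the ``reverse $O1$'' (the merge) carefully: that the merged graph is still a trivalent label-$1$ tree with white leaves, that no black vertex is adjacent to both $u$ and $v$ (true because $\Gamma_X$ is a tree), and that $w$ survives, possibly as the merged vertex $uv$ itself.

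The one point to clean up: you begin by insisting that $w$ ``corresponds to the white vertex at which $O1$ is performed,'' and then look for a black vertex adjacent to $w$ with a terminal white neighbour. That need not exist. You correctly pivot to the longest-path argument, but then the final $O1$ is performed at the merged vertex $uv$, which is generally \emph{not} $w$. This is fine for the lemma (it only asks that the sequence of $O1$'s \emph{start} from the single vertex $w$, not that every $O1$ be applied at $w$), but your opening paragraph should be rewritten so it does not promise more than you deliver.
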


\begin{proof} If $\Gamma=\Gamma_X$ consists of $w$ only, there is nothing to show. Clearly the Lemma is true if $\Gamma_X$ has only one black vertex.

Let $b$ be a black vertex incident to $w$. Deleting $b$ and its three incident edges from $\Gamma$, we obtain three subtrees $\Gamma'$, $\Gamma''$, $\Gamma'''$ that satisfy the conditions of the Lemma and with fewer black vertices than $\Gamma$. Denote by $w'$, $w''$, $w'''$ the three white vertices adjacent to $b$, where $w'=w\in \Gamma'$, $w''\in \Gamma''$ and $w'''\in \Gamma'''$. By induction on the number of black vertices, $\Gamma'$ is obtained from $w$ by repeated applications of operation $O1$.  Now apply $O1$ to $w\in \Gamma'$ to put back $b$ with its three edges and vertices $w''$ and $w'''$, then apply a sequence of $O1$'s to $w''$ and to $w'''$ to engulf $\Gamma''$ and $ \Gamma'''$.
\end{proof}

The building blocks for constructing labeled trivalent graphs for 1-connected $2$-stratifolds are called $b12$-trees and $b111$-trees:

\begin{defn} (1) The $b111$-tree is the bipartite tree consisting of one black vertex incident to three edges each of label $1$ and three terminal white vertices each of genus $0$.\\
(2) The $b12$-tree is the bipartite tree consisting of one black vertex incident to two edges one of label $1$, the other of label $2$, and two terminal white vertices each of genus $0$.
 \end{defn}
  \begin{figure}[ht]
\begin{center}
\includegraphics[width=2.5in]{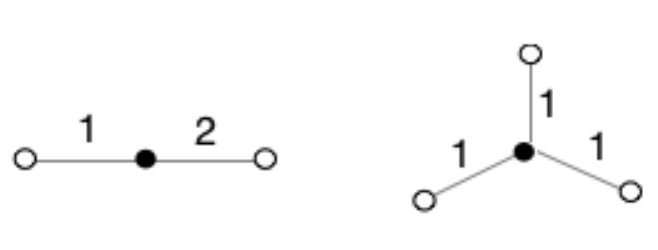}
\end{center}
\caption{\,$b12$-tree and $b111$-tree}
\end{figure}

\begin{thm}\label{111model} Let $X$ be a trivalent $2$-stratifold such that each edge of $\Gamma_X$ has label $1$. Then the following are equivalent:\\
(1) $\pi_1 (X)=1$.\\
(2) $\Gamma_X$ is a tree with all white vertices of label $0$ and all terminal vertices white.\\
(3) $\Gamma_X$ can be constructed from the $b111$-graph by successively performing operation $O1$.
\end{thm}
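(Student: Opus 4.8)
The plan is to establish the cyclic chain $(1)\Rightarrow(2)\Rightarrow(3)\Rightarrow(1)$, each step being a short deduction from a result already in hand. The implication $(1)\Rightarrow(2)$ is exactly Proposition~\ref{simplyconnected}: if $\pi_1(X)=1$ then $\Gamma_X$ is a tree, all its white vertices have genus $0$, and all its terminal vertices are white. (Neither trivalence nor the edge-label hypothesis is needed for this step.)

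For $(2)\Rightarrow(3)$: under (2) the tree $\Gamma_X$ satisfies precisely the hypotheses of Lemma~\ref{op1} — it is trivalent, a tree, all white vertices have label $0$, all terminal vertices are white, and all edges are labeled $1$ — so $\Gamma_X$ is obtained from a single white vertex $w$ by a finite sequence of operations $O1$. This sequence is nonempty, since a trivalent $2$-stratifold has at least one black vertex, so $\Gamma_X\neq\{w\}$; and its first operation, applied to the isolated vertex $w$, is necessarily the $m=1$ instance of $O1$, which turns $w$ into the $b111$-tree (with $w$ as one of its three terminal white vertices). Deleting this first step from the sequence exhibits $\Gamma_X$ as constructed from the $b111$-graph by successive $O1$'s, which is (3). (The degenerate possibility $\Gamma_X=\{w\}$ would correspond to $S^2$, which is not trivalent and is excluded by hypothesis.)

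For $(3)\Rightarrow(1)$: first observe that the $b111$-stratifold is simply connected. By the presentation recipe of section~2 its fundamental group has generators $b$ and $c_1,c_2,c_3$ — one $c_j$ for each of the three terminal once-punctured spheres — and relations $c_1=c_2=c_3=1$ together with $b=c_1$, $b=c_2$, $b=c_3$; eliminating $c_1,c_2,c_3$ and then $b$ yields the trivial group. Now take a construction sequence $\Gamma_0,\Gamma_1,\dots,\Gamma_N=\Gamma_X$ as in (3), with $\Gamma_0$ the $b111$-graph and each $\Gamma_{j+1}$ obtained from $\Gamma_j$ by a single $O1$. We prove by induction that $X_{\Gamma_j}$ is simply connected for every $j$: the base case $j=0$ is the computation just made, and if $X_{\Gamma_j}$ is simply connected then Remark~\ref{remop1} gives $\pi_1(X_{\Gamma_{j+1}})=\pi_1(X_{\Gamma_j})=1$. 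Taking $j=N$ yields $\pi_1(X)=1$.

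The substantive content — that a tree of the form in (2) can be assembled from one white vertex by repeated $O1$'s — is Lemma~\ref{op1}, hence already available, so the main work is done. The only point demanding care is the bookkeeping in $(2)\Rightarrow(3)$ that identifies the first operation of the assembly with the passage to the $b111$-tree and rules out the one-vertex graph using trivalence; the other two implications follow directly from Proposition~\ref{simplyconnected} and Remark~\ref{remop1}, so no further obstacle is anticipated.
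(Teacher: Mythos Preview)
Your proof is correct. The implications $(1)\Rightarrow(2)$ and $(3)\Rightarrow(1)$ match the paper's argument exactly (Proposition~\ref{simplyconnected} and Remark~\ref{remop1}, together with the direct computation that the $b111$-stratifold is simply connected).

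For $(2)\Rightarrow(3)$ you take a different and somewhat cleaner route than the paper. The paper runs a fresh induction on the number of black vertices: it locates a terminal $b111$-subgraph $\Psi$, removes it to split $\Gamma_X$ into two pieces $\Gamma_{X'}$ and $\Gamma_{X''}$, rebuilds $\Gamma_{X'}$ from a $b111$-graph by the inductive hypothesis, reattaches $\Psi$ by a single $O1$, and finally invokes Lemma~\ref{op1} to grow $\Gamma_{X''}$ from $w''$. You instead apply Lemma~\ref{op1} once as a black box to obtain an $O1$-sequence from the isolated vertex $\{w\}$ to $\Gamma_X$, then simply observe that the first move in this sequence (necessarily $m=k=0$, since $w$ has no incident edges) produces exactly the $b111$-tree, so the tail of the sequence witnesses (3). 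This avoids the auxiliary induction entirely and makes Lemma~\ref{op1} do all the work; the paper's argument, by contrast, gives a slightly more explicit inductive recipe for the construction. One small quibble: you call the initial step ``the $m=1$ instance of $O1$'', but for an isolated white vertex the parameter is $m=0$ (no incident edges); this is only a labeling slip and does not affect the argument.
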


\begin{proof} (1) implies (2) by Proposition \ref{simplyconnected}.\\
We show that (2) implies (3) by induction on the number of black vertices of $\Gamma_X$. If this number is $1$, then $\Gamma_X$ is a $b111$-graph, so suppose $\Gamma_X$ has at least two black vertices. By Proposition \ref{simplyconnected}, $\Gamma_X$ contains a $b111$-subgraph $\Psi$ with a white vertex $w$ which is a terminal vertex of $\Gamma_X$. Let $b$ be the black vertex and $w'$, $w''$ the other white vertices of $\Psi$. Deleting the edges of $\Psi$ together with $b$ and $w$ splits $\Gamma_X$ into two subgraphs $\Gamma_{X'}$ and $\Gamma_{X''}$, each with fewer black vertices than $\Gamma_X$ and $w'\in \Gamma_{X'}$, $w''\in \Gamma_{X''}$. Now $X'$ and $X''$ are simply-connected and satisfy the conditions of Lemma \ref{op1}. By induction, $\Gamma_{X'}$ is obtained from $\Psi$ by successively performing operation $O1$. One further operation $O1$ (starting at $w'$) adds $\Psi$ to $\Gamma_{X'}$ and by Lemma \ref {op1} we can add $\Gamma_{X''}$ by performing successively operation $O1$, starting at $w''$.\\
Finally (3) implies (1): The $b111$-graph is simply connected and by Remark \ref{remop1}, operation $O1$ does not change the fundamental group.
\end{proof}

\section{Constructing trivalent graphs with edge labels $1$ or $2$.}

For two disjoint labeled graphs $\Gamma_1=\Gamma_{X_1}$ and $\Gamma_2=\Gamma_{X_2}$ with all white vertices of genus $0$, operation $O1*$ described in Figure 5, creates a new graph $\Gamma=\Gamma_{X}$:\\

\begin{figure}[ht]{h}
\begin{center}
\includegraphics[width=4in]{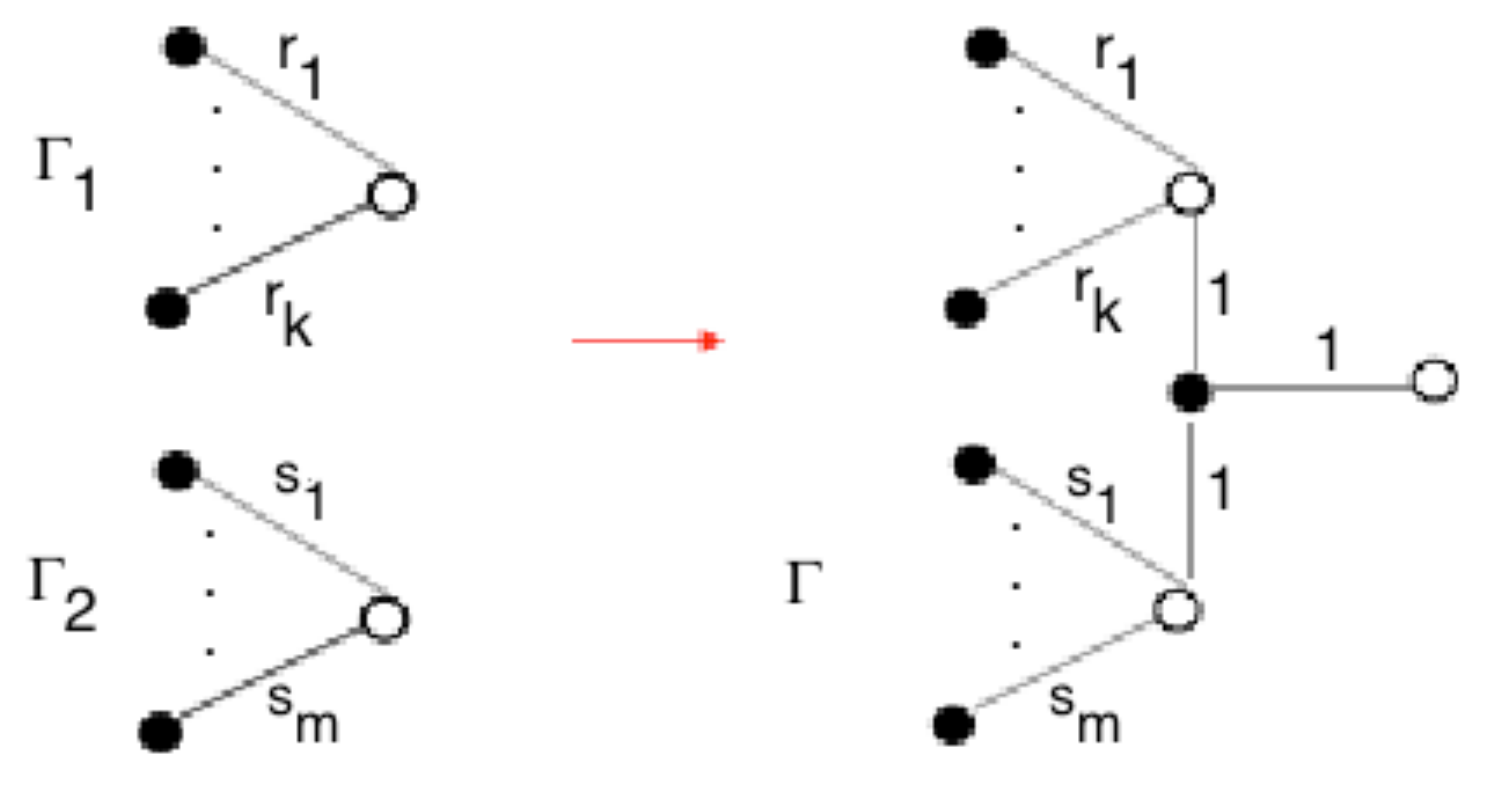}  \qquad \qquad $m \geq 0$
\end{center}
\caption{\,Operation $O1^*$}
\end{figure}

Note that $X$ is obtained from $X_1$ and $X_2$ by identifying a disk in $X_1$ with a disk in $X_2$, therefore :
\begin{rem}\label{remop1*} $\pi_1 (X) \cong \pi_1 (X_1 )*\pi_1 (X_2 )$.\end{rem}

Finally, on a labeled graph $\Gamma=\Gamma_X$ with all white vertices of genus $0$ consider operation  $O2$ described in Figure 4, that changes $\Gamma=\Gamma_X$ to $\Gamma_1 =\Gamma_{X_1}$:

\begin{figure}[ht]
\begin{center}
\includegraphics[width=3in]{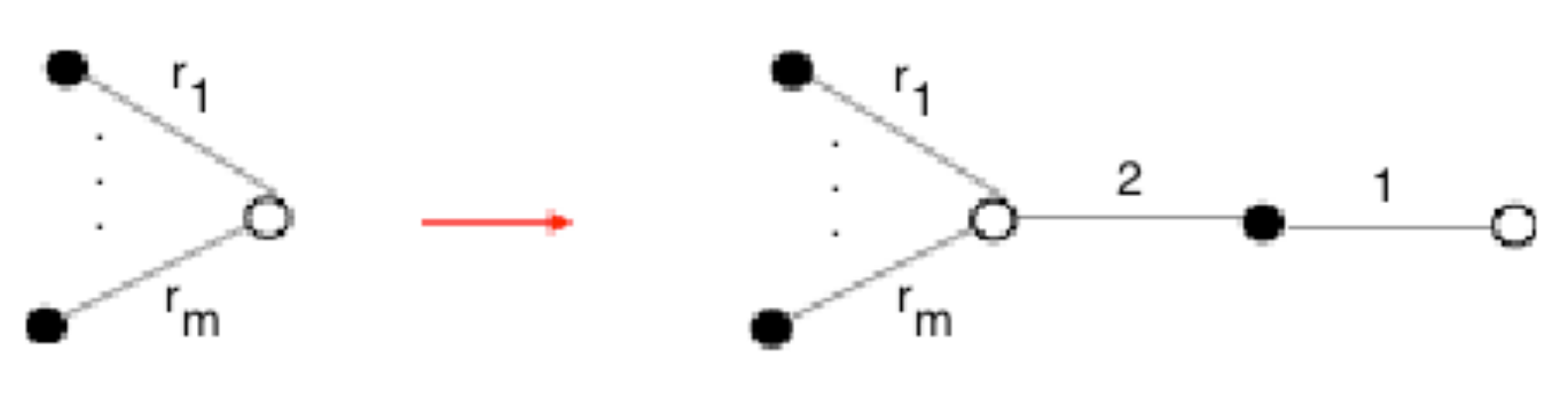}  \qquad \qquad $m \geq 0$
\end{center}
\caption{\,Operation $O2$}
\end{figure}

\begin{rem}\label{remop2} Operation $O2$ does not change the fundamental group.\end{rem}

We now describe the collection $\mathcal{G}$ of all trivalent graphs that can be obtained from a single white vertex by successively applying Operations $O1$ and $O2$.\\

For a collection $\mathcal{C}$ of bipartite labeled graphs denote by $\widehat{\mathcal{C}}$ the collection of all compact, connected bipartite labeled graphs obtained by starting with any $\Gamma_0 \in \mathcal{C}$ and successively performing Operations $O1$ or $O2$. We express this as\\

$\widehat{\mathcal{C}}= \{\emptyset, \Gamma_0 \stackrel{O^1}{\longrightarrow}\dots \stackrel{O^m}{\longrightarrow}
\Gamma \,\,\vert\,\, \Gamma_0 \in \mathcal{C},\,O^i =O1 \text{ or } O2\,;\,m\geq 0\,\}$\\

Let $\circ$ denote (the collection containing only) the graph consisting of one white vertex and let $\mathcal{G}_0 =\widehat{\circ}$\,.\\

For two connected bipartite labeled graphs $\Gamma$ and $\Gamma'$ denote by $\Gamma \bot \Gamma'$ a graph obtained by joining any white vertex of $\Gamma$ to any white vertex of $\Gamma'$ by operation $O1^*$. Note that that there are $n n'$ such $\Gamma \bot \Gamma'$, where $n$ (resp. $n'$) is the number of white vertices of $\Gamma$ (resp. $\Gamma'$). Let \\

$\mathcal{G}_0 \bot \mathcal{G}_0 =\{\,\Gamma \bot \Gamma'\,\vert\,\Gamma , \Gamma' \in \mathcal{G}_0\,\}$\\

In particular, $\mathcal{G}_0 \bot \emptyset =\mathcal{G}_0$ and $\emptyset \bot \emptyset =\emptyset$. Let \\

$\mathcal{G}_1  =\widehat{\mathcal{G}_0 \bot  \mathcal{G}_0}$, and inductively $\mathcal{G}_{n+1}  =\widehat{\mathcal{G}_n \bot  \mathcal{G}_n}$ \\

Then $\mathcal{G}_0 \subset \mathcal{G}_1 \subset \dots \subset \mathcal{G}_n  \subset \dots \,\,\subset \mathcal{G}:=\bigcup_{i=0}^{\infty} \mathcal{G}_i$

\begin{thm}\label{TH2} Let $X$ be a trivalent $2$-stratifold. Then $X$ is simply connected if and only if $\Gamma_{X} \in \mathcal{G}$. 
\end{thm}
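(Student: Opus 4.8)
\medskip
\noindent The plan is to prove the two implications separately; the implication ``$\Gamma_X\in\mathcal{G}\Rightarrow\pi_1(X)=1$'' is routine, and the converse is where the work lies.

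\smallskip
For the first implication I would induct on the least $n$ with $\Gamma_X\in\mathcal{G}_n$, recording along the way that every graph in $\mathcal{G}$ is trivalent. In the base case $\mathcal{G}_0=\widehat{\circ}$: the single white vertex represents $S^2$, which is simply connected with empty $1$-skeleton; operations $O1$ and $O2$ only adjoin a black vertex incident to three label-$1$ edges, resp.\ to one label-$1$ and one label-$2$ edge, and do not change the partitions at pre-existing black vertices, so they preserve trivalence, while by Remarks~\ref{remop1} and~\ref{remop2} they preserve $\pi_1=1$. For the inductive step a graph in $\mathcal{G}_{n+1}$ has the form $\Gamma\bot\Gamma'$ with $\Gamma,\Gamma'\in\mathcal{G}_n$ (the possibility $\Gamma'=\emptyset$ reduces to the previous case) followed by a string of $O1$'s and $O2$'s; by the inductive hypothesis $X_\Gamma$ and $X_{\Gamma'}$ are simply connected and trivalent, operation $O1^*$ adjoins only a new $b111$-vertex and, as noted just before Remark~\ref{remop1*}, amounts to identifying a disk in the $2$-manifold parts, so by that remark the result has $\pi_1\cong 1\ast 1=1$ and is again trivalent; $O1$ and $O2$ then finish as in the base case.

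\smallskip
For the converse, assume $X$ is simply connected and trivalent. By Proposition~\ref{simplyconnected}, $\Gamma_X$ is a finite tree, all white vertices have genus $0$, all terminal vertices are white, and trivalence forces every black vertex to be of $b111$-type or $b12$-type (the partition $(3)$ of $3$ would produce a terminal black vertex, which is excluded). I would argue by strong induction on the number $N$ of black vertices. If $N=0$, then $\Gamma_X=\circ\in\mathcal{G}_0$. If $N\ge 1$, the plan is to locate one of two configurations and ``undo'' it. Configuration (b): a $b12$-vertex $b$ whose label-$1$ edge ends at a terminal white vertex $w_0$; then $\Gamma_X$ is obtained from $\Gamma_{X'}:=\Gamma_X\setminus\{b,w_0\}$ by an application of $O2$, and since $w_0$ contributes the relation $b=1$ to $\pi_1(X)$, deleting this pendant is a Tietze transformation, so $X'$ is simply connected and trivalent with $N-1$ black vertices. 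Configuration (c): a $b111$-vertex $b$ with at least one terminal white neighbour $w_0$; then deleting $b$ and $w_0$ splits $\Gamma_X$ into two subtrees $\Gamma_{X_1}$, $\Gamma_{X_2}$ hanging from the two remaining white neighbours of $b$, so $\Gamma_X=\Gamma_{X_1}\bot\Gamma_{X_2}$, and by Remark~\ref{remop1*} the identity $1=\pi_1(X)\cong\pi_1(X_1)\ast\pi_1(X_2)$ forces $\pi_1(X_1)=\pi_1(X_2)=1$, with $X_1,X_2$ trivalent and each having fewer than $N$ black vertices. In case (b) the inductive hypothesis gives $\Gamma_{X'}\in\mathcal{G}$, and since each $\mathcal{G}_n$, hence $\mathcal{G}$, is closed under $O1$ and $O2$, also $\Gamma_X\in\mathcal{G}$. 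In case (c) the inductive hypothesis gives $\Gamma_{X_i}\in\mathcal{G}_{m_i}$, so $\Gamma_X=\Gamma_{X_1}\bot\Gamma_{X_2}\in\mathcal{G}_{\max(m_1,m_2)+1}\subseteq\mathcal{G}$.

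\smallskip
The main obstacle is the combinatorial claim that, whenever $N\ge 1$, configuration (b) or (c) is present. If neither occurs, then (using that terminal vertices are white) every terminal vertex is joined by a \emph{label-$2$} edge to a $b12$-vertex and no $b111$-vertex has a terminal neighbour; rooting $\Gamma_X$ at a terminal vertex, every deepest black vertex is then a $b12$-vertex with its label-$2$ edge pointing away from the root. Looking at the white vertex $w$ just above a maximal family $b_1,\dots,b_s$ of deepest vertices, each $b_i$ occurs only in the relation $R_w$ (with exponent $\pm1$) and in the relation $b_i^2=1$ of its terminal neighbour; for $s\ge 2$, eliminating $b_1$ by means of $R_w$ leaves $b_2$ constrained only by $b_2^2=1$, so sending $b_2\mapsto\bar 1$ and every other generator to $\bar 0$ defines a nontrivial homomorphism $\pi_1(X)\to\mathbb{Z}/2$, contradicting $\pi_1(X)=1$; hence $s=1$ at every such $w$, which forces $w$ to have degree $2$ and yields a relation $\beta^{2\lambda}=1$ on its parent black vertex $\beta$. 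One should then push the same circle of ideas (homological $\mathbb{Z}/2$-quotients together with these propagating torsion relations) one level further up the tree to a contradiction, or, alternatively, invoke the explicit classification of simply connected trivalent labelled graphs obtained in \cite{GGH2}. I expect this elimination step to be the crux of the argument; granted it, the two inductions above complete the proof.
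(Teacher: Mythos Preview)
Your overall architecture is exactly the paper's: the forward direction via Remarks~\ref{remop1}, \ref{remop1*}, \ref{remop2}, and the converse by induction on the number of black vertices, peeling off either a pendant $b12$ (undoing $O2$) or a $b111$ with a terminal leaf (undoing $O1^*$). The case split and the inductive bookkeeping match the paper's proof essentially line for line.

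The difference is entirely in your ``main obstacle'', the existence of configuration (b) or (c). The paper does not argue this combinatorially at all: it simply invokes Lemma~4 of \cite{GGH}, which says that if $\pi_1(X)=1$ then $\Gamma_X$ has a terminal (white) vertex whose incident edge has label~$1$. Since the adjacent black vertex is then of $b12$- or $b111$-type (the partition $(3)$ being excluded as you note), this immediately yields your configuration (b) or (c). So the step you flag as the crux is in fact a one-line citation.

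Your attempted direct proof via $\mathbb{Z}/2$-quotients is not wrong in spirit, but it is genuinely unfinished: after reducing to $s=1$ at every deepest white vertex you only obtain a relation $\beta^{2\lambda}=1$ and then defer to ``pushing one level up'' or to the full classification of \cite{GGH2}. The first option would require an inductive argument you have not supplied (and which essentially reproves the cited lemma), while the second is heavy-handed for what is needed here. Replacing this whole paragraph by the reference to Lemma~4 of \cite{GGH} makes your proof complete and identical to the paper's.
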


\begin{proof} If $\Gamma_X \in \mathcal{G}$ then $\pi_1 (X)=1$ by Remarks \ref{remop1*} and \ref{remop2}. 

Suppose $\pi_1 (X)=1$. If $\Gamma_X$ has no black vertices, or exactly one black vertex, then $\Gamma_X \in \mathcal{G}$. In any case $\Gamma_X$ is a tree with all white vertices of genus $0$ and all terminal edges white.  Furthermore by Lemma 4 of \cite{GGH},  $\Gamma_X$ contains a terminal vertex $w$ with incident edge $e$ of label $1$. Let $b$ be the black vertex incident to $e$. Then the star of $b$ in $\Gamma_X$ is a $b12$-graph or a $b111$-graph. In the first case star$(b)$ has two (open) edges  $e$,$e'$  where $e'$ has label $2$. The subgraph $\Gamma_X'$ obtained from $\Gamma_X$ by deleting $w\cup b \cup e \cup e'$ is simply connected (by Remark \ref{remop2}). By induction on the number of black vertices $\Gamma_{X}' \in \mathcal{G}$ and since $\Gamma_{X}$ is obtained from $\Gamma_{X}'$ by operation $O2$, it follows that $\Gamma_{X} \in \mathcal{G}$. 

In the second case, star($b$) has three edges  $e$,$e'$,$e''$, each with label $1$. The subgraphs $\Gamma_X'$ and $\Gamma_X''$ obtained from $\Gamma_X$ by deleting $w\cup b \cup e \cup e' \cup e''$ are simply connected by Remark \ref{remop1*}. By induction, $\Gamma_{X}' $ and $\Gamma_X''$ are in $\mathcal{G}$ and since $\Gamma_{X}$ is obtained from $\Gamma_{X}'$ and $\Gamma_X''$ by operation $O1^*$, it follows that $\Gamma_{X} \in \mathcal{G}$.
\end{proof}

\section{A classification Theorem}

In the next section an implementation of the constructions in the two previous sections is given. The motivation for some of the examples comes from the following classification theorem in  \cite{GGH2}. 

\begin{thm} A trivalent connected 2-stratifold  $X_G$ is simply connected if and only if its graph $G_X$ has the following properties:

$G_X$ is a tree with all white vertices of genus $0$ and all terminal vertices white such that the components of $G -st(B)$ are $(2,1)$-collapsible trees and the reduced graph $R(G)$ contains no horned tree. 
\end{thm}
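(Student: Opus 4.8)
The plan is to derive the theorem from Theorem~\ref{TH2} by making the family $\mathcal{G}$ explicit. Both Theorem~\ref{TH2} and the statement above characterize the simply connected trivalent $2$-stratifolds, so it suffices to prove the purely combinatorial equivalence: the labelled graph $G=G_X$ of a trivalent $2$-stratifold lies in $\mathcal{G}$ if and only if $G$ is a tree with all white vertices of genus $0$ and all terminal vertices white, every component of $G-st(B)$ is a $(2,1)$-collapsible tree, and $R(G)$ contains no horned tree; the theorem then follows at once from Theorem~\ref{TH2}. As a preliminary observation, the requirements that $G$ be a tree and that all terminal vertices be white force every black vertex to carry the partition $\{1,1,1\}$ or $\{1,2\}$ of $3$ (the remaining possibility $\{3\}$ would make the black vertex terminal), so all edge labels equal $1$ or $2$ and we are in the setting of Section~4. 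The equivalence will be proved by induction on the number of black vertices, running parallel to the proofs of Lemma~\ref{op1} and Theorem~\ref{TH2}.

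For the direction ``$G\in\mathcal{G}$ implies the listed conditions'' I would start from the single white vertex $\circ$, which satisfies all of them vacuously, and check that each of the operations $O1$, $O2$, $O1^*$ preserves them. That a tree with white vertices of genus $0$ and all terminal vertices white is carried to such a graph is immediate from the figures defining the operations. The content is the last two clauses: $O2$ inserts exactly a $b12$-graph, whose removal is the elementary $(2,1)$-collapse; $O1$ inserts a $b111$-graph, which likewise does not obstruct $(2,1)$-collapsing the component of $G-st(B)$ it meets; and $O1^*$ glues along a disk, merging two components of $G-st(B)$ that were already $(2,1)$-collapsible into one such component. For the horned-tree clause I would show that the reductions defining $R(\,\cdot\,)$ can be chosen so as to undo precisely the local structure each operation adds, so that $R$ of the output graph embeds into $R$ of the input graph(s); a horned tree in the output would then already be present in the input, contrary to the inductive hypothesis.

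For the converse I would follow the structure of the proof of Theorem~\ref{TH2}. By Lemma~4 of \cite{GGH}, $G$ contains a terminal white vertex $w$ whose incident edge has label $1$; its black endpoint $b$ then has $st(b)$ equal to a $b12$-graph or a $b111$-graph. Deleting $w\cup b$ together with the edges of $st(b)$ produces either a single graph $G'$ (in the $b12$ case) or two graphs $G'$, $G''$ (in the $b111$ case), each with fewer black vertices than $G$. The key technical step is to verify that $G'$ and $G''$ again satisfy the three combinatorial conditions --- that deleting the configuration $st(b)$ neither destroys the $(2,1)$-collapsibility of the component it met nor creates a horned tree in the reduced graph; granting this, the inductive hypothesis gives $G',G''\in\mathcal{G}$, and then $G=O2(G')$, respectively $G=O1^*(G',G'')$, shows $G\in\mathcal{G}$.

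I expect the main obstacle to be the bookkeeping around the reduction $R$ and horned trees: one must prove that $R$ is compatible with $O1$, $O2$, $O1^*$ and with the deletion of $b12$- and $b111$-graphs, and --- what amounts to the same thing --- that ``$R(G)$ contains no horned tree'' is exactly the hypothesis guaranteeing that the peeling process of Theorem~\ref{TH2} terminates at $\circ$ rather than stalling at a nontrivial irreducible graph. The cleanest way to see the latter is that a horned tree supports a surjection of $\pi_1 (X)$ onto a nontrivial group, which is incompatible with $\pi_1 (X)=1$; and once horned trees are excluded, the successive $(2,1)$-collapses reduce the presentation of $\pi_1 (X)$ (one generator per black vertex, one relation per white vertex) to the empty presentation. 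This last point also gives, if one prefers, a direct proof of the ``if'' direction that does not pass through Theorem~\ref{TH2}.
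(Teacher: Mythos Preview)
The paper does not prove this theorem at all: Section~5 merely quotes it from \cite{GGH2} as motivation for the implementation in Section~6. So there is no ``paper's own proof'' to compare against here; the original argument lives in \cite{GGH2} and is logically prior to Theorem~\ref{TH2}, not a consequence of it. Your plan reverses that dependence, deriving the classification from Theorem~\ref{TH2}; that is a legitimate alternative in principle, but it is not what either paper does.

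As a proof, your proposal has a genuine gap beyond the parts you already flag as incomplete. You reduce the theorem to a ``purely combinatorial'' equivalence between $G\in\mathcal{G}$ and the stated conditions, but in the converse direction you invoke Lemma~4 of \cite{GGH} to locate a terminal white vertex with incident edge of label~$1$. In the proof of Theorem~\ref{TH2} that lemma is applied under the hypothesis $\pi_1(X)=1$; you are applying it under the combinatorial hypotheses only, and it is false at that level of generality (a horned tree is a trivalent tree with all white vertices of genus~$0$, all terminal vertices white, and \emph{every} terminal edge labelled~$2$). So you must instead extract the label-$1$ terminal edge directly from the hypotheses that the components of $G-st(B)$ are $(2,1)$-collapsible and that $R(G)$ contains no horned tree; this is exactly the substantive content of the theorem, and your sketch does not supply it. The remaining steps you label ``key technical step'' and ``main obstacle'' are likewise only outlined, so what you have is a plausible plan rather than a proof.
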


For convenience we here recall the definitions of the terms used in this theorem.\\

$B$ denotes the union of all the black vertices of degree $3$ of $G$ and $st(B)$ is the (open) star of $B$ in $G$.  \\

A {\it $(2,1)$-collapsible tree} is a bi-colored tree constructed as follows:\\
Start with a rooted tree $T$  (which may consist of only one vertex) with root $r$ (a vertex of $T$), color with white and label $0$ the vertices of $T$,  take the barycentric subdivision $sd (T)$ of $T$, color with black the new vertices (the barycenters of the edges of $T$) and finally label an edge $e$ of $sd (T)$ with $2$ (resp. $1$) if the distance from $e$ to the root $r$ is even (resp. odd). (We  allow a one-vertex tree (with white vertex) as a $(2,1)$- collapsible tree).\\

The {\it reduced subgraph} $R(G)$ is defined for a bi-colored labeled tree $G$ for which the components of $G -st(B)$ are $(2,1)$-collapsible trees. It is the graph obtained from $St(B)$ (the closed star of $B$) by attaching to each white vertex $w$ of $St(B)$ that is not a root, a $b12$-graph such that the terminal edge has label $2$.\\

Finally, 
A {\it horned tree} is a bi-colored tree constructed as follows:\\
Start with a tree $T$ that has at least two edges and all of whose nonterminal vertices have degree $3$.
 Color a vertex of $T$ white (resp. black) if it has degree $1$ (resp. $3$). Trisect the terminal edges of $T$ and bisect  the nonterminal edges, obtaining the graph $H_T$.
Color the additional vertices $v$ so that $H_T$ is bipartite, that is, $v$ is colored black if $v$ is a neighbor of a terminal vertex of $H_T$ and white otherwise. Then label the edges such that every terminal edge has label $2$, every nonterminal edge has label $1$.\\

\section{Implementation}

An implementation for some parts of this paper and \cite{GGH2}, mainly the algorithm for checking if a labelled graph determines a simply connected 2-stratifold, was coded using Python (v3.6.4) and the module NetworkX (v2.1) \cite{NX}.  A link for downloading the code from a github repository is included. Coded examples and a file with details on the implementation can also be found in that repository.

By construction, a graph of a $2$-stratifold is bipartite, but it is possible for it to have more than one edge between a black and a white vertex. As all edges are undirected, the class \texttt{strat\_graph} is implemented as a subclass of \texttt{MultiGraph} (which is part of the NetworkX module).

The color of each node is stored as an 0-1 attribute, namely \texttt{'bipartite'}, which is equal to 0 for white vertices and 1 for black ones. The labels for the edges were stored as an integer attribute called \texttt{'weight'}. Up to this point we are only interested in graphs which have all white vertices with label 0, so this attribute for white nodes is not yet implemented.

The module (strat.py), along with a readme file and examples (examples.py), may be downloaded from \emph{https://github.com/yair-hdz/stratifolds}. 

Troughout the code, the word \emph{node} is also used to refer to vertices. The following methods belong to the class \texttt{strat\_graph}:
\begin{itemize}
\item \texttt{\_\_init\_\_(self,black=[],white=[],edges=None)}: the initializer of this class. 

\item \texttt{addEdg(self,edges)}: a function for adding edges. 

\item \texttt{addNod(self, black=[], white=[])}: a funcion for adding black or white vertices. 

\item \texttt{black\_vals(self)}: returns a dictionary where the keys are the black vertices and values are the sum of the labels of the edges incident to that node (which, in the case of a trivalent stratifold, must equal 3).

\item \texttt{is\_trivalent(self)}: returns True if the graph is trivalent, False otherwise.

\item \texttt{white(self)}: returns a set object with the white vertices. \texttt{black(self)} does the same for black vertices.

\item \texttt{copy(self)}: returns a copy of the \texttt{strat\_graph} instance.

\item \texttt{is\_horned\_tree(self)}: returns True if the graph is a horned tree (definition 3.3 from \cite{GGH2}), and False otherwise.

\item \texttt{is\_21\_collapsible(self)}: if \texttt{self} is a 2,1-collapsible tree (defined before lemma 3.2 in \cite{GGH2}), returns the root of the tree. Otherwise, returns \texttt{None}.

\item \texttt{subg(self,nodes)}: returns a subgraph from \texttt{self} with \texttt{nodes} as set of nodes, where an edge occurs if and only both vertices belong to \texttt{nodes}. All labels are preserved.

\item \texttt{St\_B(self)}: returns a list of the connected components of $St(B)$ (notation from \cite{GGH2}: $St(B)$ is the closed star of $B$, the set of vertices with degree 3), where each component is a \texttt{strat\_graph} instance.

\item \texttt{graph\_stB(self)}: returns a list of the components of $G\setminus st(B)$, where (using notation from \cite{GGH2}) $st(B)$ is the open star of $B$.

\item \texttt{is\_simply\_connected(self)}: implementation of theorem 3.6 from \cite{GGH2}. Returns True if self is the graph of a simply connected trivalent $2$-stratifold. 

\item \texttt{O1(self,white\_node,black\_nodes1,black\_nodes2,W0=None,W1=None,} \texttt{B=None)}: returns a copy of \texttt{self} where operation $O1$ has been performed on the vertex \texttt{white\_node}. 

\item \texttt{O1\_1(self,node,other,node\_other, black=None,white=None)}: performs operation $O1*$ if \texttt{self} and \texttt{other} are disjoint graphs.

\item \texttt{O2(self,white\_node,new\_white=None,new\_black=None)}: performs operation $O2$ on vertex \texttt{white\_node}. 

\item \texttt{draw(self,trivalent=False)}: Function for drawing a graph. Calls the function draw from networkx, coloring black vertices black and white vertices gray. If trivalent=True, asserts if graph is trivalent and then draws the graph with edges with label 2 bold.
\end{itemize}

Additionaly, the module has functions \texttt{b111} and \texttt{b12} for generating $b111$- and $b12$-trees.

For further details on each method, refer to the readme file from the repository.

\subsection{Examples}

In the following examples letters are used to name black vertices and integers for white vertices; this is only for identifying them more easily, but the code does not require this to be always the case. Recall that when calling \texttt{draw(trivalent=True)} (or simply \texttt{draw(True)}), edges drawn bold have label 2, and all others have label 1. 

The following example is the horned tree from figure 5 in \cite{GGH2}.

\begin{verbatim}
>>> from strat import *
>>> ###########Test if a graph is Horned Tree
>>>
>>> G1=strat_graph()
>>> edgs=[(1,'a',2),(3,'a'),(3,'c'),
>>>       (2,'b',2),(4,'b'),(4,'c'),
>>>       (5,'c'),(5,'d'),(6,'d'),(6,'e'),
>>>       (8,'e',2),(7,'d'),(7,'f'),(9,'f',2)]
>>> G1.addEdg(edgs)
>>> print(G1.is_horned_tree())
True
>>> G1.draw(trivalent=True)
\end{verbatim}
\begin{center}
\includegraphics[width=2.5in]{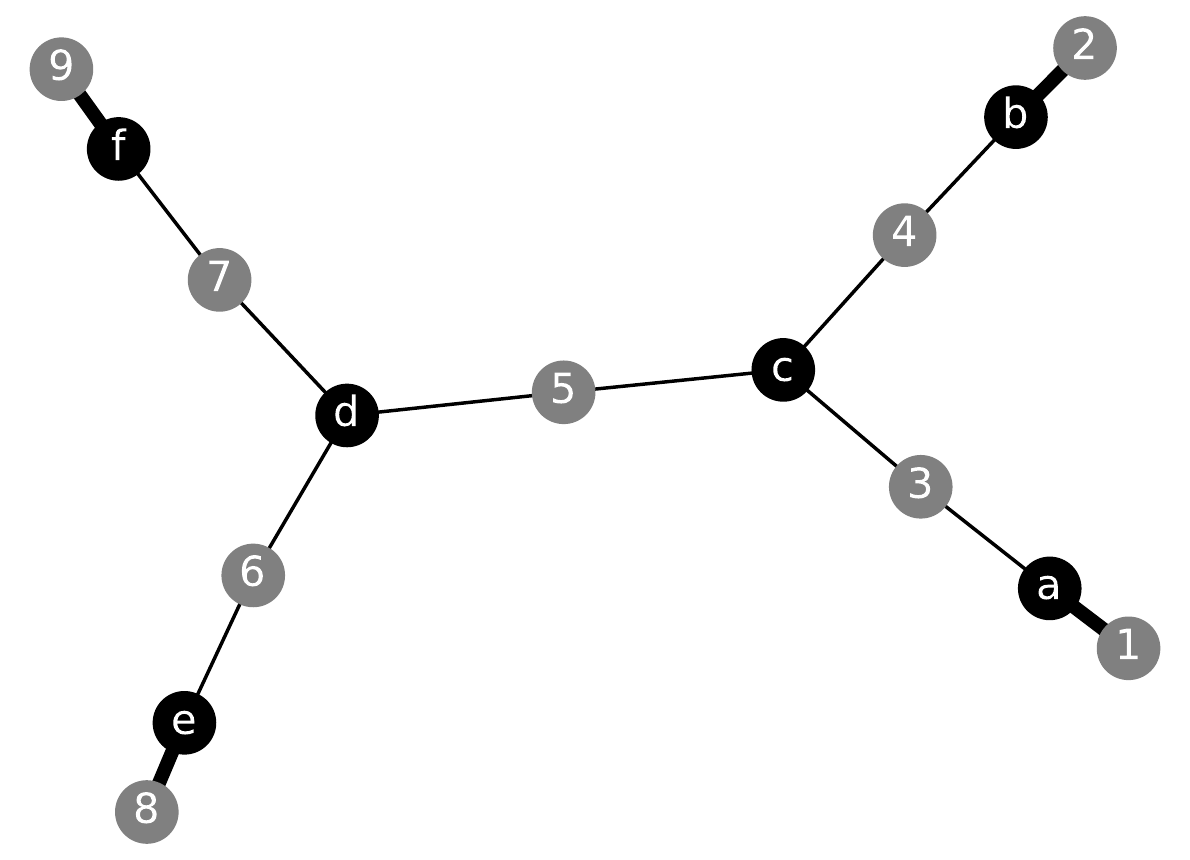}
\end{center}

The following example checks if G2 is 2,1-collapsible. Recall that the function \texttt{is\_21\_collapsible()} returns the root (in this case \texttt{1}) if the graph is a 2,1-collapsible tree, and \texttt{None} otherwise.
\begin{verbatim}
>>> G2=strat_graph()
>>> edgs=[(1,'a',2),(1,'b',2),(2,'a'),(3,'b')]
>>> G2.addEdg(edgs)
>>> print(G2.is_21_collapsible())
1
>>> G2.draw(True)
\end{verbatim}
\begin{center}
\includegraphics[width=2in]{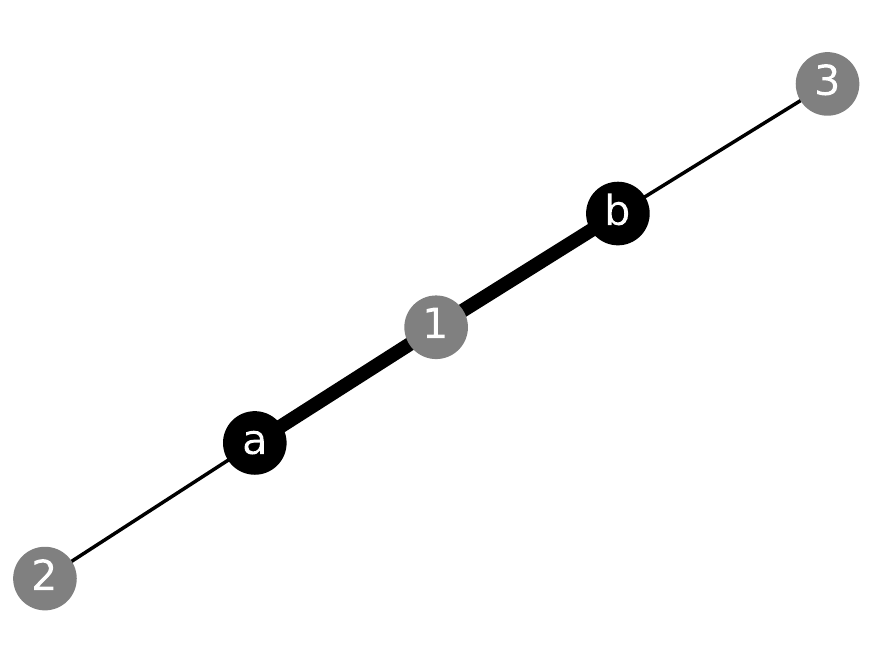}
\end{center}

We now check if a graph comes from a trivalent simply connected $2$-stratifold. This is the graph of figure 4 in \cite{GGH2}.
\begin{verbatim}
>>> G3=strat_graph()
>>> edgs=[(1,'a',2),(2,'a'),(2,'b',2),(3,'b'),
>>>       (3,'c',2),(4,'c'),(1,'d',2),(5,'d'),
>>>       (5,'e',2),(6,'e'),(5,'f',2),(7,'f'),
>>>       (7,'g',2),(7,'h',2),(7,'i',2),(8,'g'),
>>>       (9,'h'),(10,'i'),
>>>       (4,'j'),(11,'j'),(12,'j'),(12,'k'),
>>>       (13,'k'),(13,'l'),(15,'l',2),(15,'n',2),
>>>       (17,'n'),(14,'k'),(14,'m'),(16,'m',2),
>>>       (16,'o',2),(16,'p',2),(18,'o'),(19,'p'),
>>>       (19,'q'),(20,'q'),(21,'q')]
>>> G3.addEdg(edgs)
>>> print(G3.is_simply_connected())
True
>>> G3.draw(True)
\end{verbatim}
\begin{center}
\includegraphics[width=3in]{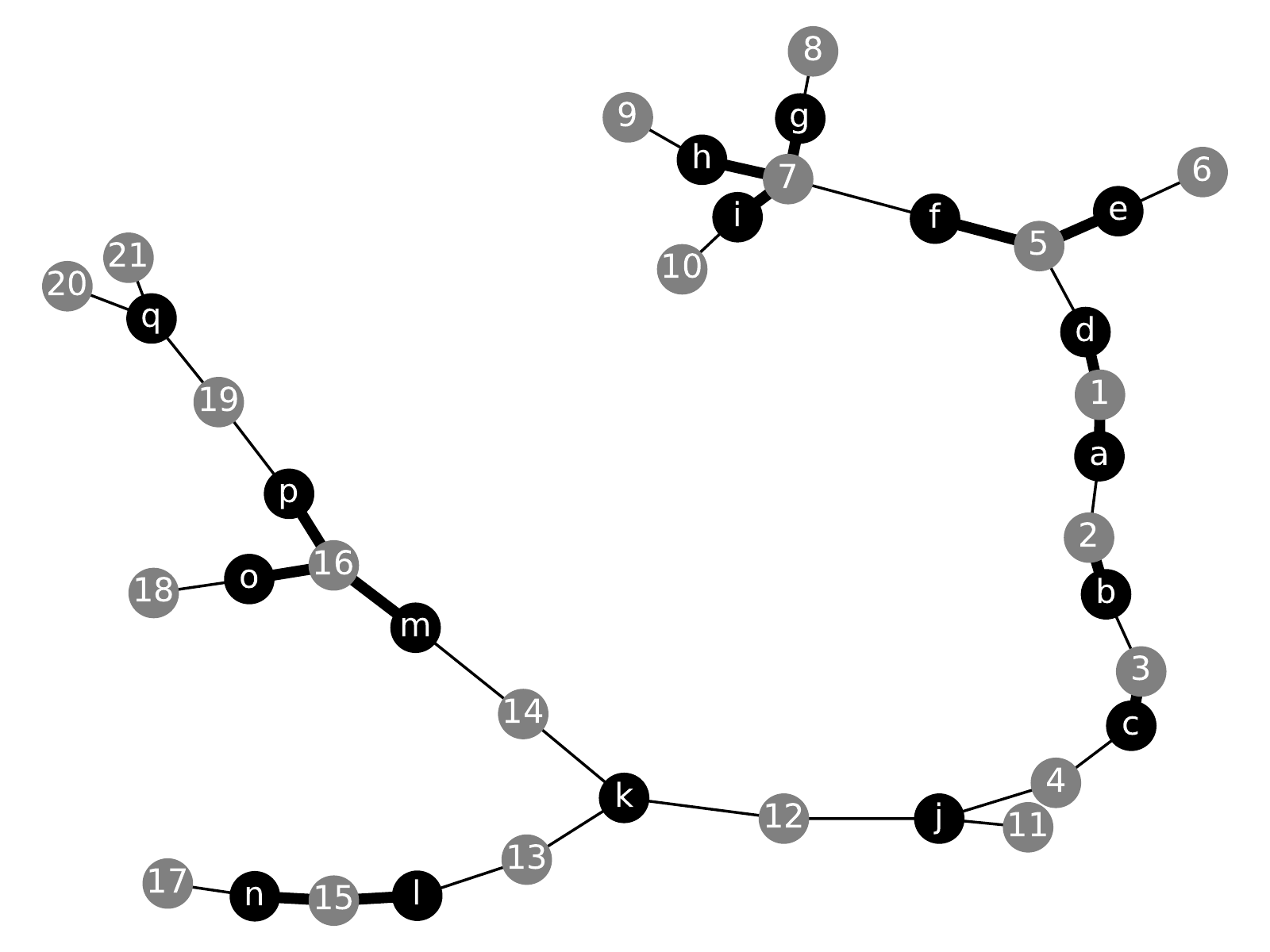}
\end{center}

The following example illustrates theorem \ref{TH2}: we build a simply connected trivalent graph starting from a graph with only one white vertex.
\begin{verbatim}
>>> W=get_int()
>>> B=get_str()
>>> 
>>> #Start with a single white vertex
>>> G4=strat_graph(white=[next(W)])
>>> G4.draw(True)
\end{verbatim}
\begin{center}
\includegraphics[width=1in]{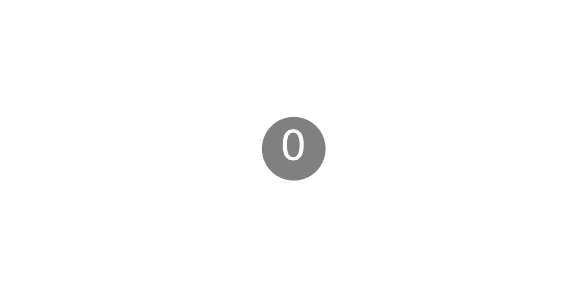}
\end{center}
\begin{verbatim}
>>> #Perform O1 on it
>>> G4=G4.O1(0,[],[],next(W),next(W),next(B))
>>> G4.draw(True)
\end{verbatim}
\begin{center}
\includegraphics[width=2in]{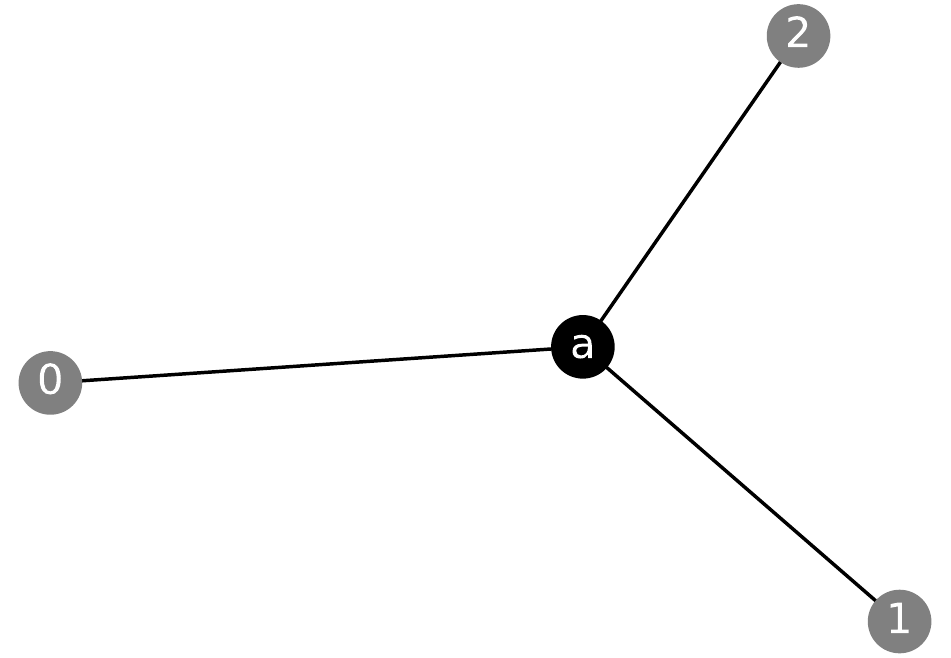}
\end{center}
\begin{verbatim}
>>> #Perform O2 on vertex 0
>>> G4.O2(0,next(W),next(B))
>>> G4.draw(True)
\end{verbatim}
\begin{center}
\includegraphics[width=2in]{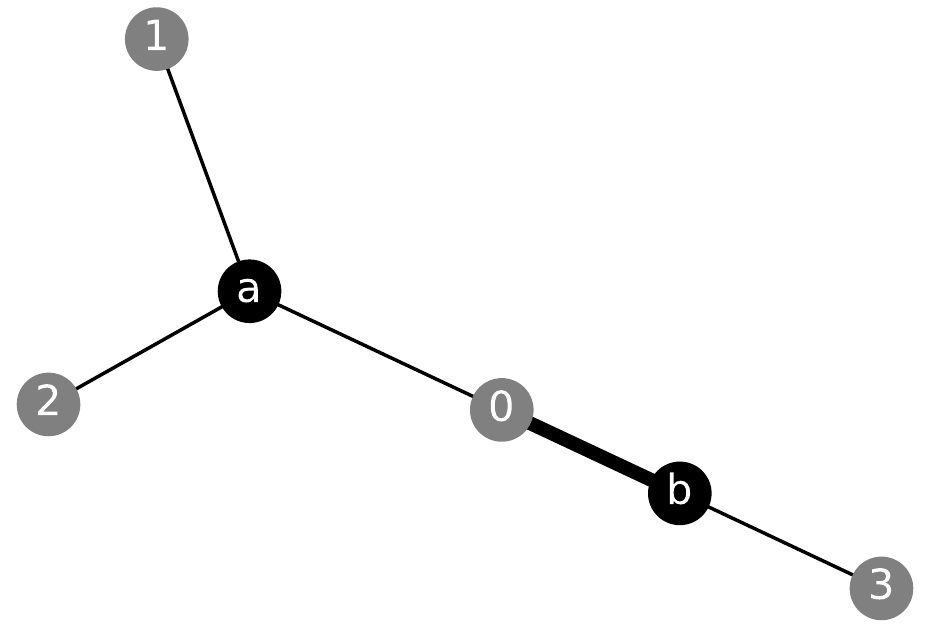}
\end{center}
\begin{verbatim}
>>> #Perform O1* on vertex 2, where the second graph is a b111-tree
>>> aux=b111(black=next(B),white=[next(W) for i in range(3)])
>>> G4=G4.O1_1(2,aux,list(aux.white())[0],black=next(B),white=next(W))
>>> G4.draw(True)
\end{verbatim}
\begin{center}
\includegraphics[width=2in]{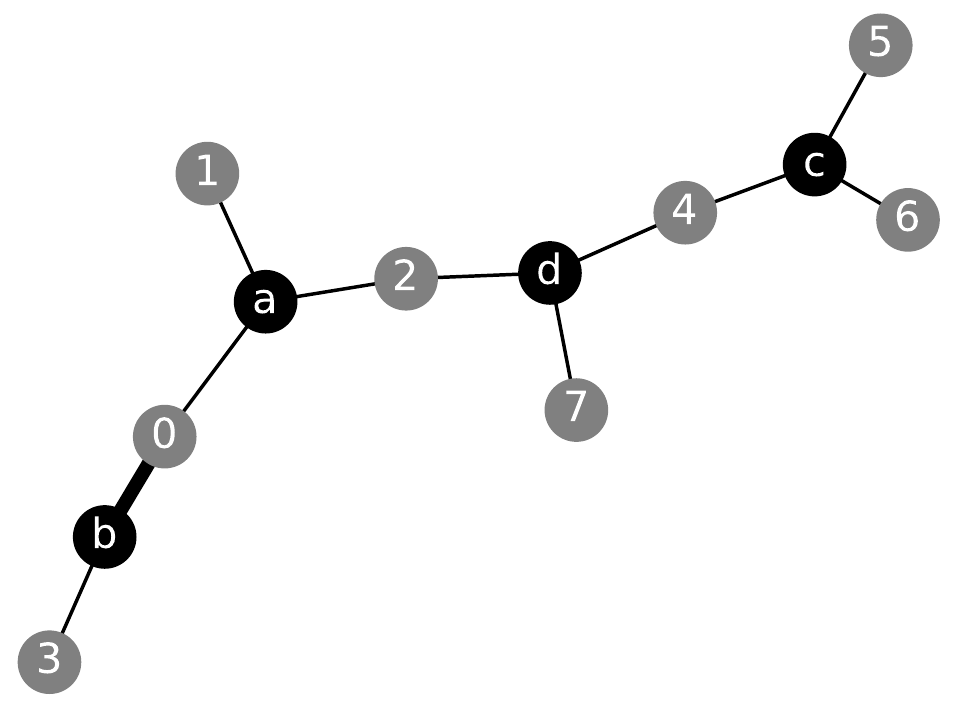}
\end{center}
From theorem \ref{TH2}, this graph should be trivalent and simply-connected:
\begin{verbatim}
>>> print(G4.is_simply_connected())
True
\end{verbatim}


{\bf Acknowledgments:} J. C. G\'{o}mez-Larra\~{n}aga would like to thank LAISLA and INRIA Saclay for financial support  and INRIA Saclay and IST Austria for their hospitality.

\end{document}